\theoremstyle{plain}
\newtheorem{theorem}{Theorem}[section]
\newtheorem{lemma}[theorem]{Lemma}
\theoremstyle{definition}
\newtheorem{definition}[theorem]{Definition}
\newtheorem{corollary}[theorem]{Corollary}
\newtheorem{example}[theorem]{Example}
\begin{document}
\title{Periods of the discretized Arnold's Cat Mapping and its extension to $n$-dimensions}
\author{Joe Nance\\
  \texttt{nance2uiuc@gmail.com}\\
  Department of Mathematics \\
  University of Illinois at Urbana-Champaign\\}
\maketitle

\begin{abstract}
A discrete dynamical system known as Arnold's Discrete Cat Map (Arnold's DCM) is given by
\[
(x_{t+1},y_{t+1})=(x_t+y_t,x_t+2y_t)\mod N,
 \]
 which acts on a two-dimensional square coordinate grid of size $N\times N$. The defining characteristic of this map is that it has the property that when the $N\times N$ grid is a picture whose pixels are assigned $(x,y)$ coordinates, the map scrambles the picture with each iteration. After a finite number of iterations, the picture is restored to its original shape and order. The number of iterations needed to restore the image $M$, has a mysterious dependence on $N$. This period, as we will find out, is directly related to the divisibility of the Fibonacci numbers. We will exploit this property to show that, for any $N$, an image is not dense in itself. In the second half of the paper, we build on the work of Chen, Mao, and Chui to extend the DCM to three dimensions. Finally, we define the generalized $n$ dimensional DCM by introducing the idea of a ``matrix union".
\end{abstract}
\section{The nature of the 2D DCM}
Arnold's Cat Mapping is a chaotic mapping on the two dimensional torus given by
\[A(x,y)=(x+y,x+2y)\mod 1,\]
but for our purposes, we will use the discretized version \[(x_{t+1},y_{t+1})=(x_t+y_t,x_t+2y_t)\mod N,\] where $N$ is the size of the discrete grid on which the map acts. The 2D DCM can be written equivalently in matrix form as
\[
\begin{pmatrix}
x_{t+1} \\
y_{t+1}\end{pmatrix}
=
\begin{pmatrix}
1 & 1 \\
1 & 2\end{pmatrix}
\begin{pmatrix}
x_t \\
y_t\end{pmatrix}\mod N.
\]

\begin{example}
Consider an arbitrary $3\times 3$ image given by
\[\begin{pmatrix}
A & B & C \\
D & E & F \\
G & H & I \end{pmatrix}.\]
The $(x,y)$-coordinates of the pixels in this image are
\[\begin{pmatrix}
(0,2) & (1,2) & (2,2) \\
(0,1) & (1,1) & (2,1) \\
(0,0) & (1,0) & (2,0) \end{pmatrix} \mod 3.\]
Let's look at the orbit of an arbitrary pixel under the DCM,
\[(1,1) \rightarrow (2,0) \rightarrow (2,2) \rightarrow (1,0) \rightarrow (1,1).\] The orbit of the entire image looks like
\[\begin{split}\begin{pmatrix}
A & B & C \\
D & E & F \\
G & H & I \end{pmatrix}
\rightarrow
\begin{pmatrix}
B & D & I \\
F & H & A \\
G & C & E \end{pmatrix}
& \rightarrow
\begin{pmatrix}
D & F & E \\
A & C & B \\
G & I & H \end{pmatrix} \rightarrow \\
& \rightarrow
\begin{pmatrix}
F & A & H \\
B & I & D \\
G & E & C \end{pmatrix}
\rightarrow
\begin{pmatrix}
A & B & C \\
D & E & F \\
G & H & I \end{pmatrix}.\end{split}\]
So a $3\times 3$ image has period $M=4$ since  four iterations of the cat map were needed to return the image to its original state.
\end{example}
\begin{table}[h!]
\begin{center}
\begin{tabular}{| l| c| l |}
\hline
pixel dimension of image ($N \times N$) & iterations to restore image (period) \\
\hline
300 $\times$ 300 & 300 \\
257 $\times$ 257 & 258 \\
183 $\times$ 183 & 60 \\
157 $\times$ 157 & 157 \\
150 $\times$ 150 & 300 \\
147 $\times$ 147 & 56 \\
124 $\times$ 124 & 15 \\
100 $\times$ 100 & 150 \\
\hline
\end{tabular}
\end{center}
\caption{Notice, there is not an apparent correlation between the size of an image and its period.}
\end{table}
Dyson and Falk give some relationships and bounds on the period of an image of size $N$:
\[M \begin{cases}
=3N \mbox{ for } N=2(5^s) \ ; s\in \mathbb{N}\\
=2N \mbox{ for } N=5^s \mbox{ or } 6(5^s) \ ; s\in \mathbb{N} \\
\leq \frac{12N}{7}\text{ for other }N\end{cases}\]

\section{An image is not dense in itself}
As in Example 1.1, consider the orbit of a single pixel as it jumps around an image with each iteration. An interesting question to ask is, ``Does there exist $N$ such that every pixel visits every other pixel at least once?" or perhaps more formally, ``Does there exist $N$ such that an image is dense in itself?"

To answer this question, it will be convenient to use the matrix
\[A=\begin{pmatrix}
0 & 1 \\
1 & 1\end{pmatrix},
\mbox{ where }
A^2=:A_2=\begin{pmatrix}
1 & 1 \\
1 & 2\end{pmatrix},\]
and define the Fibonacci sequence as $u_0=0,\ u_1=1,\ u_2=1,\ u_3=2, u_4=3,...,[u_{t+2}=u_{t+1}+u_t]$. Then, the matrix for the $t$-th iteration of the DCM is given by
\[A^{2t}=\begin{pmatrix}
u_{2t-1} & u_{2t}\\
u_{2t} & u_{2t+1}\end{pmatrix}.\]

For a given $N$, the period $M$ of the DCM is the smallest positive integer $t$, such that $u_{2t}\equiv 0\mod N$, and $u_{2t-1}\equiv 1\mod N$. This implies that $u_{2t+1}\equiv u_{2t+2}\equiv 1\mod N$. Thus, the period $M$ of the DCM for a given image size $N$ is strongly related to the divisibility of the Fibonacci numbers. Dyson and Falk give the following theorem and proof in [2]. The fact that an image is not dense in itself follows as a corollary.

\begin{theorem}
For an image of size $N\times N$, where $N \geq 3$, the period $M$ of this image satisfies $M\leq N^2/2$.
\end{theorem}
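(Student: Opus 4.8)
The plan is to recast the period $M$ as the order of a matrix modulo $N$ and then split the bound into two independent pieces: a pigeonhole estimate of size $N^2$, and a parity argument that supplies the saving factor of $1/2$. Working with $A=\begin{pmatrix}0&1\\1&1\end{pmatrix}$, the identity $A^{2t}=\begin{pmatrix}u_{2t-1}&u_{2t}\\u_{2t}&u_{2t+1}\end{pmatrix}$ recorded above shows that $M$ is the smallest $t\ge 1$ with $A^{2t}\equiv I\pmod N$. Let $\rho$ denote the order of $A$ modulo $N$, i.e. the least $s\ge 1$ with $A^{s}\equiv I\pmod N$. Since $A^{s}\equiv I$ is equivalent to $u_{s}\equiv 0$ and $u_{s+1}\equiv 1$ (the entry $u_{s-1}=u_{s+1}-u_{s}\equiv 1$ then following for free), $\rho$ is exactly the period of the Fibonacci sequence modulo $N$. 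The condition $A^{2t}\equiv I$ is then equivalent to $\rho\mid 2t$, so $M$ is the least positive $t$ with $\rho\mid 2t$.

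First I would bound $\rho$ by pigeonhole. The pair $(u_n,u_{n+1})\bmod N$ ranges over the $N^2$ elements of $(\mathbb{Z}/N\mathbb{Z})^2$, and it determines both the next pair $(u_{n+1},u_{n+2})$ and the previous one $(u_{n-1},u_n)$ via $u_{n-1}=u_{n+1}-u_n$; thus the one-step advance map is invertible and the sequence of pairs is purely periodic. Among any $N^2+1$ consecutive pairs two must coincide, and purity forces the repetition all the way back to the seed $(u_0,u_1)$, giving $\rho\le N^2$ (in fact $\rho\le N^2-1$, since $(0,0)$ never occurs).

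The crux is the parity step, which is also precisely where the hypothesis $N\ge 3$ enters. Because $\det A=-1$, applying $\det$ to $A^{\rho}\equiv I\pmod N$ yields $(-1)^{\rho}\equiv 1\pmod N$; for $N\ge 3$ we have $-1\not\equiv 1$, so $\rho$ must be even. Consequently the least $t$ with $\rho\mid 2t$ is $t=\rho/2$, i.e. $M=\rho/2$. Combining this with the pigeonhole bound gives $M=\rho/2\le N^2/2$, as claimed.

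I expect the one genuinely delicate point to be justifying $M=\rho/2$ rather than $M=\rho$: this equality is false for $N=2$ (there $\rho=3$ is odd and $M=3>N^2/2$), so the argument cannot avoid the evenness of $\rho$, and it is exactly the determinant computation $(-1)^\rho\equiv 1$ that cleanly delivers that evenness for every $N\ge 3$. Everything else—the matrix reformulation and the reversible-pigeonhole bound—is routine once this structural observation is in place.
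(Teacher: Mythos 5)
Your proposal is correct and takes essentially the same route as the paper: your reversible pigeonhole argument (the step map on pairs is invertible, so the sequence of pairs is purely periodic and the seed recurs within $N^2$ steps) is exactly the content of the paper's Lemmas 2.3--2.4, and your determinant computation $(-1)^{\rho}\equiv 1\pmod{N}$ forcing $\rho$ even for $N\geq 3$ is precisely Lemma 2.5. Your group-theoretic packaging via the order $\rho$ of $A$ modulo $N$ merely rephrases the paper's concluding step $2M=j-1$, though it does make the passage from evenness of $\rho$ to $M=\rho/2$ (which the paper asserts rather tersely) fully explicit.
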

Before we can prove this theorem, we will prove three short lemmas.

\begin{definition}
Define $\phi_i$ as the least non-negative residue of $u_i$, the $i$th Fibonacci number $\mod N$. That is, $u_i\equiv\phi_i\mod N$.
\end{definition}
Consider the sequence of ordered pairs $\langle\phi_1,\phi_2\rangle,\langle\phi_2,\phi_3\rangle,\dots,\langle\phi_i,\phi_{i+1}\rangle,\dots$ There are at most $N^2$ distinct pairs. Any set of $N^2+1$ pairs contains some equal ones among them.

\begin{lemma}
The first pair that repeats in the above sequence is $\langle1,1\rangle$.
\end{lemma}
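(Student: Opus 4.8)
The plan is to exploit the reversibility of the Fibonacci recurrence. First I would observe that the transition from one pair to the next is governed by a single map $T\colon (\mathbb{Z}/N\mathbb{Z})^2 \to (\mathbb{Z}/N\mathbb{Z})^2$ given by $T(a,b) = (b,\, a+b)$, so that $\langle \phi_{i+1}, \phi_{i+2}\rangle = T(\langle \phi_i, \phi_{i+1}\rangle)$; this is merely the statement that $\phi_{i+2} \equiv \phi_i + \phi_{i+1} \pmod N$. The initial pair is $\langle \phi_1, \phi_2\rangle = \langle 1, 1\rangle$, since $u_1 = u_2 = 1$ and $N \geq 3$ guarantees these residues are genuinely $1$.

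The crucial point is that $T$ is a bijection: its inverse is $T^{-1}(a,b) = (b-a,\, a)$, which exists precisely because the recurrence can be run backward through $\phi_{i-1} \equiv \phi_{i+1} - \phi_i \pmod N$. Thus $T$ is a permutation of the finite set $(\mathbb{Z}/N\mathbb{Z})^2$, and the sequence of pairs is nothing but the forward orbit of $\langle 1, 1\rangle$ under this permutation.

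Next I would argue by minimality. By the pigeonhole remark preceding the lemma, some pair must repeat; let $i$ be the smallest index at which a repetition occurs, so $\langle \phi_i, \phi_{i+1}\rangle = \langle \phi_j, \phi_{j+1}\rangle$ for some $j$ with $1 \leq j < i$. If $j > 1$, then applying $T^{-1}$ to both sides yields $\langle \phi_{i-1}, \phi_i\rangle = \langle \phi_{j-1}, \phi_j\rangle$, a repetition occurring at index $i-1 < i$, contradicting the minimality of $i$. Hence $j = 1$, and the first repeated pair is $\langle \phi_1, \phi_2\rangle = \langle 1, 1\rangle$, as claimed.

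The main obstacle is really just recognizing the invertibility of $T$; once that is in hand, the ``no earlier repeat'' conclusion is forced, since a permutation of a finite set decomposes into disjoint cycles, so the orbit of any point must return to its starting value before it can meet any previously visited point. Everything else is a short verification that the endpoints match the Fibonacci initial conditions. I expect no genuine difficulty beyond keeping the index bookkeeping straight when pulling the repetition back by one step.
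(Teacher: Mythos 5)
Your proposal is correct and takes essentially the same route as the paper: both arguments hinge on running the Fibonacci recurrence backward via $\phi_{i-1}\equiv\phi_{i+1}-\phi_i \pmod{N}$ to pull a repetition back one index and contradict minimality. Your packaging of that step as invertibility of the map $T(a,b)=(b,\,a+b)$ on $(\mathbb{Z}/N\mathbb{Z})^2$ is just a cleaner formulation of the paper's descent argument, not a different proof.
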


\begin{proof}
Assume the opposite: that the first repeated pair is $\langle\phi_k,\phi_{k+1}\rangle$, where $k>1$. So let us find a pair $\langle\phi_r,\phi_{r+1}\rangle$ where $r>k$ in the sequence such that $\phi_k=\phi_r,\phi_{k+1}=\phi_{r+1}$. From the definition of the Fibonacci numbers,
\begin{align*}
\phi_{r-1} &=\phi_{r+1}-\phi_r, \\
\phi_{k-1} &=\phi_{k+1}-\phi_k,\end{align*}

so $\phi_{r-1}=\phi_{k-1}$ and we have that $\langle\phi_{r-1},\phi_r\rangle=\langle\phi_{k-1},\phi_k\rangle$.
But $\langle\phi_{k-1},\phi_k\rangle$ occurs earlier in the sequence than $\langle\phi_k,\phi_{k-1}\rangle$; therefore $\langle\phi_k,\phi_{k-1}\rangle$ is not the first pair that repeats itself. So the supposition $k>1$ is wrong. Therefore, $k=1$. This proves the lemma.
\end{proof}

\begin{lemma}
For any positive integer $N$, at least one number divisible by $N$ can be found among the first $N^2$ Fibonacci numbers.
\end{lemma}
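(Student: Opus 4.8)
The plan is to combine a simple pigeonhole count on the pairs $\langle\phi_i,\phi_{i+1}\rangle$ with Lemma 2.3 to locate a Fibonacci number divisible by $N$ at a controlled index. First I would use the observation already recorded above: each pair $\langle\phi_i,\phi_{i+1}\rangle$ lives in the set $\{0,1,\dots,N-1\}^2$, which has exactly $N^2$ elements. Therefore, among the $N^2+1$ pairs $\langle\phi_1,\phi_2\rangle,\langle\phi_2,\phi_3\rangle,\dots,\langle\phi_{N^2+1},\phi_{N^2+2}\rangle$, the pigeonhole principle forces at least two to coincide.

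Next I would bring in Lemma 2.3. Since the Fibonacci recurrence is reversible modulo $N$ — from $\langle\phi_i,\phi_{i+1}\rangle$ one recovers $\phi_{i-1}\equiv\phi_{i+1}-\phi_i\pmod N$ — the sequence of pairs has no pre-period, and Lemma 2.3 identifies the first pair to recur as $\langle\phi_1,\phi_2\rangle=\langle1,1\rangle$. Hence $\langle1,1\rangle$ itself reappears; let $M+1$ be the index of its first return, so $\langle\phi_{M+1},\phi_{M+2}\rangle=\langle1,1\rangle$. Because the $M$ pairs $\langle\phi_1,\phi_2\rangle,\dots,\langle\phi_M,\phi_{M+1}\rangle$ preceding this return must all be distinct (an earlier coincidence would contradict the minimality furnished by Lemma 2.3), and there are at most $N^2$ distinct pairs available, I obtain $M\le N^2$.

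Finally, reading off the recurrence at the return point finishes the argument. From $\phi_{M+2}\equiv\phi_{M+1}+\phi_M\pmod N$ together with $\phi_{M+1}=\phi_{M+2}=1$, I conclude $\phi_M\equiv0\pmod N$, i.e. $N\mid u_M$. Since $1\le M\le N^2$, the Fibonacci number $u_M$ lies among the first $N^2$, which is exactly the claim.

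The main obstacle I anticipate is not the closing computation but the bookkeeping that pins the return to an index at most $N^2$: one must argue carefully that the raw pigeonhole coincidence can be upgraded, via reversibility and Lemma 2.3, into a return of the specific pair $\langle1,1\rangle$ within the first $N^2$ steps, rather than merely some repeat of some pair somewhere. Keeping this index bound tight and the off-by-one clean is where the care is required.
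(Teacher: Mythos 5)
Your proof is correct and takes essentially the same route as the paper: the pigeonhole count on the pairs $\langle\phi_i,\phi_{i+1}\rangle$, Lemma 2.3 to identify the first recurrence as $\langle 1,1\rangle$ at an index at most $N^2+1$, and one backward step of the Fibonacci recurrence at the return point to extract $\phi\equiv 0\pmod N$. Your extra bookkeeping about the distinctness of the pairs before the first return merely makes explicit the index bound $1<t\leq N^2+1$ that the paper asserts directly from its preceding remark.
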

\begin{proof}
From the previous lemma, $\langle 1,1\rangle$ is the first pair that repeats itself. So $\langle \phi_t, \phi_{t+1}\rangle=\langle 1,1 \rangle$ for some integer $t$ such that $1<t\leq N^2+1$. Thus
\[\phi_t \equiv 1\mod N,\]
and
\[\phi_{t+1} \equiv 1\mod N.\]
But
\[u_{t-1}=u_{t+1}-u_t.\]
Therefore,
\[\phi_{t-1} \equiv 0\mod N.\]
This proves the lemma.
\end{proof}
\begin{lemma}
For $N>2$, if $u_t \equiv 0\mod N$ and $u_{t+1} \equiv 1\mod N$, then $t$ must be even.
\end{lemma}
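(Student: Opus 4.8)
The plan is to lean on Cassini's identity for the Fibonacci numbers,
\[
u_{t-1}u_{t+1}-u_t^2=(-1)^t,
\]
which emerges for free from the matrix formalism already introduced. The stated relation $A^{2t}=\begin{pmatrix}u_{2t-1}&u_{2t}\\u_{2t}&u_{2t+1}\end{pmatrix}$ is the even case of the general formula $A^{t}=\begin{pmatrix}u_{t-1}&u_{t}\\u_{t}&u_{t+1}\end{pmatrix}$, which follows from the base case $A^1=A$ by a one-line induction using the Fibonacci recurrence. Since $\det A=-1$, taking determinants of $A^t$ gives
\[
u_{t-1}u_{t+1}-u_t^2=\det(A^t)=(\det A)^t=(-1)^t,
\]
and this single identity drives the whole argument.

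First I would pin down the residue of $u_{t-1}$. From $u_{t-1}=u_{t+1}-u_t$ together with the hypotheses $u_{t+1}\equiv 1$ and $u_t\equiv 0\pmod N$, we obtain $u_{t-1}\equiv 1\pmod N$. Reducing Cassini's identity modulo $N$ and substituting $u_{t-1}\equiv 1$, $u_{t+1}\equiv 1$, $u_t\equiv 0$ then yields
\[
(-1)^t\equiv 1\cdot 1-0^2\equiv 1\pmod N.
\]

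The conclusion is a short parity argument. Were $t$ odd, we would have $(-1)^t=-1$, so the congruence would force $-1\equiv 1\pmod N$, i.e.\ $N\mid 2$ and hence $N\leq 2$; this contradicts the hypothesis $N>2$, so $t$ must be even. The only step requiring genuine insight is recognizing Cassini's identity as the right lever---after the determinant observation the rest is mechanical. I would also emphasize that the bound $N>2$ is truly needed rather than cosmetic: at $N=2$ the relation $-1\equiv 1$ holds, and the parity restriction really does break down.
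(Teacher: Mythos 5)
Your proof is correct and follows essentially the same route as the paper: both arguments rest on taking determinants of $A^t$, since your Cassini identity $u_{t-1}u_{t+1}-u_t^2=(-1)^t$ is exactly the statement $\det(A^t)=(\det A)^t=(-1)^t$ that the paper invokes after recasting the hypothesis as $A^t\equiv I \pmod N$. If anything, you make explicit two steps the paper leaves implicit---the deduction $u_{t-1}\equiv 1\pmod N$ needed for that recasting, and the role of $N>2$ in ruling out $-1\equiv 1$---so your write-up is a slightly more careful version of the same argument.
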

\begin{proof}
The lemma is equivalent to the statement that for $N>2$, if $A^t \equiv 1\mod N$, then $t$ is even. But $\det(A)=-1$, so $\det(A^t)=(\det\ A)^t=(-1)^t \equiv 1\mod N$. Hence $t$ must be even. This proves the lemma.
\end{proof}
\begin{theorem}
The period, $M$ of the DCM satisfies $M\leq \frac{N^2}{2}$.
\end{theorem}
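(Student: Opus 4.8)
The plan is to glue the three preceding lemmas together with a single pigeonhole step followed by a parity reduction. The starting point is the characterization stated just above the lemmas: $M$ is the least positive $t$ with $u_{2t}\equiv 0$ and $u_{2t-1}\equiv 1 \mod N$, equivalently the least $t$ with $A^{2t}\equiv I \mod N$, i.e. the multiplicative order of $A^2$ modulo $N$. My strategy is to first locate an index $s\le N^2$ at which the Fibonacci pair returns to $\langle 1,1\rangle$, and then invoke evenness to replace $s$ by $s/2$.

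First I would extract from the proof of Lemma 2.4 the slightly stronger statement that it actually establishes. Among the $N^2+1$ pairs $\langle\phi_1,\phi_2\rangle,\dots,\langle\phi_{N^2+1},\phi_{N^2+2}\rangle$ two must coincide, and by Lemma 2.3 the earliest repeated value is $\langle 1,1\rangle=\langle\phi_1,\phi_2\rangle$. Hence there is an index $t$ with $1<t\le N^2+1$ and $\phi_t\equiv\phi_{t+1}\equiv 1 \mod N$. Writing $s=t-1\le N^2$ and running the recurrence backwards, $u_s=u_{s+2}-u_{s+1}\equiv 1-1=0$ while $u_{s+1}=u_t\equiv 1$, so $u_s\equiv 0$ and $u_{s+1}\equiv 1 \mod N$ with $s\le N^2$.

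Next I would feed this $s$ into Lemma 2.5. Since $N\ge 3$ and $u_s\equiv 0,\ u_{s+1}\equiv 1\mod N$, that lemma forces $s$ to be even, say $s=2k$ with $k\le N^2/2$. It then remains only to verify that $t=k$ meets the defining conditions for the period: $u_{2k}=u_s\equiv 0$, and $u_{2k-1}=u_{s-1}=u_{s+1}-u_s\equiv 1-0=1 \mod N$. Because $M$ is the smallest positive integer with this property, we conclude $M\le k=s/2\le N^2/2$, which is the claimed bound.

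I do not anticipate a genuine obstacle, as all of the arithmetic content is already carried by the three lemmas and the remaining work is purely bookkeeping. The one place to be careful is the shift of index: the pigeonhole argument bounds the number of pairs examined (namely $N^2+1$), but the divisibility it produces sits at a Fibonacci index $s\le N^2$, and it is Lemma 2.5's parity conclusion --- available precisely because $N\ge 3$ --- that converts this $N^2$ into the factor $N^2/2$. I would also note that exhibiting the single witness $t=k$ suffices, since $M$ is defined as a minimum, so there is no need to identify $M$ exactly.
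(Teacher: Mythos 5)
Your proposal is correct and takes essentially the same route as the paper's proof: pigeonhole together with Lemma 2.3 to locate the return of $\langle 1,1\rangle$ among the first $N^2+1$ pairs, the backward recurrence (the content of Lemma 2.4) to produce $u_s\equiv 0$, $u_{s+1}\equiv 1 \bmod N$ with $s\le N^2$, and Lemma 2.5 to force $s$ even. If anything, your closing step is slightly more careful than the paper's, which asserts the exact equality $2M=j-1$ without justification, whereas you only exhibit $k=s/2$ as a witness and invoke minimality of $M$ to conclude $M\le s/2\le N^2/2$, which is all the bound requires.
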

\begin{proof}
From the first and second lemmas, the second occurrence of the pattern $0,1,1$ in the sequence $\phi_0,\phi_1,...,\phi_t,\phi_{t+1},...$ happens for $\phi_{j-1}, \phi_j, \phi_{j+1}$, where $0<j-1\leq N^2$. From the third lemma, $j-1$ must be even. From the definition of the period, we have that $2M=j-1$. This proves the theorem.
\end{proof}
\begin{corollary}
An image is not dense in itself.
\end{corollary}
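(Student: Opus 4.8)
The plan is to reduce the corollary to a short counting argument resting directly on the period bound $M \le N^2/2$ just established. First I would pin down what ``dense in itself'' should mean in this discrete setting. Since the matrix $A_2$ is invertible modulo $N$ (indeed $\det A_2 = 1$), the DCM acts as a bijection on the finite set of $N^2$ pixels, so it permutes them. The image being dense in itself---every pixel visiting every other pixel at least once---is then precisely the statement that this permutation is a single cycle of length $N^2$, equivalently that the orbit of some (hence every) pixel exhausts all $N^2$ pixels.

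Next I would relate the orbit length of an individual pixel to the global period $M$. By definition $M$ is the least positive integer for which $M$ iterations return the entire grid to its original arrangement. Hence for every pixel $p$, applying the map $M$ times fixes $p$, so the length of the orbit of $p$ divides $M$; in particular each orbit contains at most $M$ distinct pixels.

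Combining these two observations with the theorem gives the result immediately: every orbit has length at most $M \le N^2/2 < N^2$, so no pixel's orbit can contain all $N^2$ pixels. Equivalently, the induced permutation cannot be a single $N^2$-cycle. Therefore no $N \ge 3$ produces an image that is dense in itself, which is exactly the claim.

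I do not expect a genuine obstacle here, as this is a direct corollary of the period bound. The only points that require care are bookkeeping: justifying that the per-pixel orbit length divides the global period $M$ (which is where invertibility of $A_2 \bmod N$ enters), and translating the informal notion of ``density'' into the orbit-length statement that the inequality $M \le N^2/2$ actually controls. Once those identifications are made, the strict inequality $N^2/2 < N^2$ closes the argument.
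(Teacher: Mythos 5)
Your proposal is correct and follows essentially the same route as the paper: both deduce that density would force a pixel's orbit to exhaust all $N^2$ positions, contradicting the bound $M \le N^2/2$ from the theorem. Your version is in fact slightly more careful than the paper's---you justify that each orbit length divides $M$ via the permutation structure, and you correctly require an orbit of length $N^2$ where the paper loosely says the period would equal $N^2-1$---but these are refinements of the same counting argument, not a different proof.
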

\begin{proof}
Suppose there exists an $N$ such that an $N\times N$ image was dense in itself. For this to be true, it would have to be the case that the period of the image equals $N^2-1$ since there are $N^2$ pixels and without any iterations, the pixel occupies itself. But from the theorem, an arbitrary pixel in an $N\times N$ image has a period at most of half the total number of pixels in the image; a contradiction. Therefore, the number of unique places that an arbitrary pixel occupies in one full cycle is never more than half of the number of possible places. Since the choice of pixel is arbitrary, this theorem holds for the whole image. So an image is not dense in itself under the discrete cat map. This proves the corollary.
\end{proof}

\section{Higher dimensional analogs}
What would a three dimensional analog to the $2D$ DCM look like? Certainly, the mapping would have to act on an $N\times N\times N$ cube. An $N$-cube would have to posses a finite period, like its $2D$ cousin. The mapping would also have to posses the area-preserving and mixing dynamics of the $2D$ DCM. A plausible way to obtain an explicit form for the $3D$ DCM would be to compose three mappings. Each mapping would fix one coordinate of $x,y,z$-space and have the lower dimensional DCM act on the remaining two coordinates. This way, after composition of the three ``basis" maps, the entire $N\times N\times N$ cube has been mapped in the way of the cat.

We have
\[\mathbf{A_2}=\begin{pmatrix}
1 & 1 \\
1 & 2 \end{pmatrix}.\]
Fixing $x$, we obtain the first of three ``basis" maps,
\[a_3^1=\begin{pmatrix}
1&0&0 \\
0&1&1 \\
0&1&2 \end{pmatrix}.\]
Fixing $y$, we obtain the second basis map,
\[a_3^2=\begin{pmatrix}
1&0&1\\
0&1&0 \\
1&0&2 \end{pmatrix}.\]
Fixing $z$, the third basis map is
\[a_3^3=\begin{pmatrix}
1&1&0\\
1&2&0 \\
0&0&1 \end{pmatrix}.\]

Composition of linear functions dictates that we multiply their respective matrix representations,
\[\mathbf{A_3}=a_3^1 a_3^2 a_3^3=\begin{pmatrix}
1&1&1\\
2&3&2 \\
3&4&4 \end{pmatrix}.\]

This is the matrix for the three dimensional discrete cat map.

\subsection{The concept of ``matrix union"}
The systematic extrapolation of the $2D$ DCM to three dimensions can be generalized for an arbitrary positive integer dimension. We just need to introduce a bookkeeping device to formalize the action of fixing a coordinate in a basis map.
\begin{definition} $\{F_n^i\}$ is the set of ``$n\times n$ $i$-frames",
\[F_n^i=\begin{pmatrix}
 & & &0& & & \\
 & & &\vdots & & & \\
 & & &0& & & \\
0&\cdots &0& d_i &0&\cdots &0\\
 & & &0& & & \\
 & & &\vdots & & & \\
 & & &0& & & \end{pmatrix},\]
where $d_i=1$ is the $i$th diagonal and the $i$th row and column except for the entry $[d_i]$ consist entirely of $0$'s.
\end{definition}

\begin{example}
\[F_3^2=\begin{pmatrix}
 &0& \\
0&1&0 \\
 &0& \end{pmatrix},
 \
 F_3^1=\begin{pmatrix}
1&0&0\\
0& & \\
0& & \end{pmatrix}\]
\end{example}

\begin{definition}
The matrix union $\mho$ is a binary function $\mho: \mathbf{A_n} \times F_{n+1}^i \rightarrow a_{n+1}^i$ which takes an $n$ dimensional DCM matrix $\mathbf{A_n}$ and inserts it into an $n+1$ dimensional $i$-frame. The output is an $n+1$ dimensional basis map $a_{n+1}^i$, which fixes the $i$th coordinate of the $n$ dimensional DCM.
\end{definition}

\begin{definition}
A basis map $a_{n+1}^i$ is an element of the ``union basis" of $\mathbf{A_n}$, $\{a_{n+1}^i\}={\{\mho(\mathbf{A_n},F_{n+1}^i)\}}_{i=1}^{n+1}$. This matrix fixes the $i$th coordinate of the $n$ dimensional DCM.
\end{definition}

\begin{example}
\[\mho (\mathbf{A_2}, F_3^1)=a_3^1=\begin{pmatrix}
1&0&0 \\
0&1&1 \\
0&1&2\end{pmatrix},
\
\mho (\mathbf{A_3}, F_4^3)=a_4^3=\begin{pmatrix}
1&1&0&1\\
2&3&0&2\\
0&0&1&0\\
3&4&0&4\end{pmatrix}.\]
\end{example}
This new machinery allows for a concise iterative expression for the matrix for the $n$ dimensional DCM:
\begin{equation}\label{eq:ndcm}
\mathbf{A_{n+1}}=\prod_{i=1}^{n+1} \mho(\mathbf{A_n},F_{n+1}^i) ;\ n\geq 2.
\end{equation}
\begin{example}
\textrm{Generate the matrix for the $4D$ DCM.}
Using (\ref{eq:ndcm}), we have
\[a_4^1=\begin{pmatrix}
1&0&0&0 \\
0&1&1&1 \\
0&2&3&2 \\
0&3&4&4\end{pmatrix}
a_4^2=\begin{pmatrix}
1&0&1&1\\
0&1&0&0 \\
2&0&3&2 \\
3&0&4&4\end{pmatrix}
a_4^3=\begin{pmatrix}
1&1&0&1\\
2&3&0&2 \\
0&0&1&0 \\
3&4&0&4\end{pmatrix}
a_4^4=\begin{pmatrix}
1&1&1&0 \\
2&3&2&0 \\
3&4&4&0 \\
0&0&0&1\end{pmatrix}.\]
And so
\[\mathbf{A_4}=a_4^1 a_4^2 a_4^3 a_4^4=\begin{pmatrix}
17&23&18&5\\
110&149&117&31 \\
257&348&274&72 \\
432&585&460&122\end{pmatrix}.\]
\end{example}
\subsection{Dynamical properties of higher dimensional cat maps}
We now focus our attention on the general cat map whose $t$-th iteration is given by $\vec{x}_t=\mathbf{A_n}^t\vec{x}_0$, where $\mathbf{A_n}$ is the matrix derived in the preceding section. In this mapping, we allow $\vec{x}_0$ to be continuous. Unlike the DCM, this map allows for points with irrational coordinates. This way, the mapping has chaotic orbits.
\begin{definition}
(Alligood, Saur, and Yorke Def. 5.2)
Let $\mathbf{f}$ be a map of $\mathbb{R}^m$, $m\geq 1$, and let $\{\mathbf{v}_0, \mathbf{v}_1, \mathbf{v}_2, ...\}$ be a bounded orbit of $\mathbf{f}$. This orbit is chaotic if: it is not asymptotically periodic, $\mathbf{f}$ has no eigenvalue equal to $1$, and at least one eigenvalue is greater than $1$ in absolute value.
\end{definition}
For an orbit of the general cat map, an argument using folliations of the torus (which is beyond the scope of this paper) can be used to show that if a point has irrational coordinates, then its orbit is not asymptotically periodic. For the second two conditions, it boils down to finding the characteristic polynomial of $\mathbf{A_n}$. If $(x-1) \nmid \chi_{\mathbf{A_n}}(\lambda)$ and at least one root of $\chi_{\mathbf{A_n}}(\lambda)$ is greater than $1$ in absolute value, then any orbit of the general cat map whose coordinates are irrational is chaotic. Characteristic polynomials are given for the first few cases of the $n$ dimensional general cat map below:

\begin{align*}
\chi_{\mathbf{A_2}}(\lambda) &= \lambda^2-3\lambda+1, \\
\chi_{\mathbf{A_3}}(\lambda) &= -\lambda^3+8\lambda^2-6\lambda+1, \\
\chi_{\mathbf{A_4}}(\lambda) &= \lambda^4-562\lambda^3+410\lambda^2-66\lambda+1.\end{align*}

Approximate forms for eigenvalues are given respectively as:
\begin{align*}
&0.381966,\quad 2.61803; \\
&0.243019,\quad 0.572771,\quad 7.18421; \\
&0.0168808,\quad 0.209427,\quad 0.50397,\quad 561.27. \end{align*}

As you can see, so far there are not any eigenvalues equal to $1$ and at least one eigenvalue is larger than $1$ in absolute value in each case. These eigenvalues can be loosely interpreted as the amount of stretching on a per iteration basis of the nearby orbits under the map, where the largest value dominates the stretching behavior. In a sense, these numbers measure how chaotic a system is.

I conjecture that as the dimension of the generalized cat map increases, the dominant eigenvalue of the map increases. Hence, larger dimensional analogs are more chaotic than their lower dimensional counterparts. Since we have an iterative way using the matrix union to generate higher dimensional matrices for an arbitrary cat map which factors as a product of easily understood basis maps, I propose that in order to prove or disprove this, one would need to develop some way of studying the interaction of characteristic polynomials under the action of multiplying their corresponding matrices. This way, one may be able to establish a bijection between the natural numbers and absolute values of eigenvalues of cat maps of increasing dimension.
\newpage


\begin{thebibliography}{widest entry}
\bibitem{cite_key1} Peterson, Gabriel. ``Arnold's Cat Map." College of the Redwoods. Ed. David Mills. N.p., Sept. 1997. Web. 3 Mar. 2011.
\bibitem{cite_key2} F. Dyson and H. Falk. ``Period of a Discrete Cat Mapping." \emph{The American Mathematical Monthly} 99.7 Aug. (1992): 603-14.
\bibitem{cite_key3} Chen, Mao, and Chui. ``A symmetric image encryption scheme based on 3D chaotic cat maps." \emph{Chaos, Solitons, and Fractals}1.21 (2004): 749-61.
\bibitem{cite_key4} Alligood, Kathleen T., Tim D. Saur, and James A. Yorke. Chaos: An Introduction to Dynamical Systems. New York: Springer, 1996.
\bibitem{bar} C. Bar, Elementary Differential Geometry, New York: Cambridge University Press, 2010.
\bibitem{ymb} Y.M. Baryshnikov, Spherical billiards with periodic orbits, preprint.
\bibitem{birkhoff}  G.D. Birkhoff, Dynamical Systems, revised edition, Amer. Math. Soc. Colloq. Publ., vol. IX, Amer. Math. Soc., Providence, RI, 1966.
\bibitem{carmo} M. do Carmo, Differential Geometry of Curves and Surfaces, Prentice–Hall Inc., Englewood Cliffs, New Jersey (1976).
\bibitem{genin} D. Genin, S. Tabachnikov. On configuration space of plane polygons,
 sub-Riemannian geometry and periodic orbits of outer billiards.
 J. Modern Dynamics, 1 (2007), 155-173.
\bibitem{tumanov} A. Tumanov and V. Zharnitsky, Periodic orbits in outer billiard, International Mathematics Research Notices, vol. 2006, 1-17.
\end{thebibliography}
 \end{document}